\newcommand{\Z}{{\mathbb Z}}
\newtheorem{claim}{Claim}
\newcommand{\vsten}{\vspace{10pt}}
\newenvironment{proofcite}[1]{\noindent{\bf Proof of #1.\,}}{\hfill$\Box$}
\begin{document}
\title{Improved Bounds for $r$-Identifying Codes of the Hex Grid}
\author{Brendon Stanton\thanks{Iowa State University, Department of Mathematics, 396 Carver Hall, Ames IA, 50010}}
\maketitle

\begin{abstract}For any positive integer $r$, an $r$-identifying code on a graph $G$ is a set $C\subset V(G)$ such that for every vertex in $V(G)$, the intersection of the radius-$r$ closed neighborhood with $C$ is nonempty and pairwise distinct.  For a finite graph, the density of a code is $|C|/|V(G)|$, which naturally extends to a definition of density in certain infinite graphs which are locally finite.  We find a code of density less than $5/(6r)$, which is sparser than the prior best construction which has density approximately $8/(9r)$.
\end{abstract}

\section{Introduction}

Given a connected, undirected graph $G=(V,E)$, define
$B_r(v)$, called the ball of radius $r$ centered at $v$, to be
$$B_r(v)=\{u\in V(G): d(u,v)\le r\} $$ where $d(u,v)$ is the distance between
$u$ and $v$ in $G$.

Let $C\subset V(G)$.  We say that $C$ is an $r$-identifying code if $C$ has the properties:
\begin{enumerate}
\item $B_r(v) \cap C \neq \emptyset, \text{ for all } v\in V(G) \text{ and}$
\item $B_r(u) \cap C \neq B_r(v)\cap C, \text{ for all distinct } u,v\in V(G).$
\end{enumerate}
It is simply called a code when
$r$ is understood.  The elements of $C$ are called codewords.
We define $I_r(v)=I_r(v,C)=B_r(v)\cap C$.  We call
$I_r(v)$ the identifying set of $v$ with respect to $C$.  If $I_r(u)\neq I_r(v)$ for some $u\neq v$, the we say $u$ and $v$ are distinguishable.  Otherwise, we say they are indistinguishable.

Vertex identifying codes were introduced in ~\cite{Karpovsky1998} as
a way to help with fault diagnosis in multiprocessor computer
systems.  Codes have been studied in many graphs.  Of particular
interest are codes in the infinite triangular, square, and hexagonal
lattices as well as the square lattice with diagonals (king grid).
We can define each of these graphs so that they have vertex set
$\Z\times\Z$.  Let $Q_m$ denote the set of vertices $(x,y)\subset
\Z\times\Z$ with $|x|\le m$ and $|y|\le m$.  The
density of a code $C$ defined in ~\cite{Charon2002} is
$$D(C)=\limsup_{m\rightarrow\infty}\frac{|C\cap Q_m|}{|Q_m|}.$$

When examining a particular graph, we are interested in finding the
minimum density of an $r$-identifying code.  The exact minimum
density of an $r$-identifying code for the king grid has been found in
~\cite{Charon2004}.  General constructions of $r$-identifying codes
for the square and
triangular lattices have been given in ~\cite{Honkala2002} and
~\cite{Charon2001}.

For this paper, we focus on the hexagonal grid.  It was shown in
~\cite{Charon2001} that $$\frac{2}{5r}-o(1/r)\le D(G_H,r) \le
\frac{8}{9r}+o(1/r).$$  Where $D(G_H,r)$ represents the minimum density of
an $r$-identifying code in the hexagonal grid.
The main theorem of this paper is Theorem ~\ref{maintheorem}:

\begin{theorem}\label{maintheorem} There exists an $r$-identifying code of
density $$\frac{5r+3}{6r(r+1)},  \text{ if $r$ is even} ;\qquad
            \frac{5r^2+10r-3}{(6r-2)(r+1)^2},  \text{ if $r$ is odd}.$$
\end{theorem}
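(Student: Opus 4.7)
The plan is to exhibit an explicit periodic code $C\subset V(G_H)$ and verify directly that it is $r$-identifying with the claimed density. I would describe $C$ as the union of translates of a finite pattern under a sublattice $\Lambda\subset\Z\times\Z$, chosen so that a fundamental domain $F$ of $\Lambda$ contains exactly $5r+3$ codewords among its $6r(r+1)$ vertices in the even case (and analogously $5r^2+10r-3$ among $(6r-2)(r+1)^2$ vertices in the odd case). The density claim then reduces to the identity $D(C)=|C\cap F|/|F|$, which is immediate once the pattern is written down. Concretely, I would place the bulk of the codewords along evenly spaced "diagonal" lines calibrated to the hex geometry, and then insert a small number of additional codewords inside each fundamental domain whose role is purely to break ties between close pairs of vertices.

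With the pattern fixed, one verifies the two defining conditions of an $r$-identifying code. The covering condition, $B_r(v)\cap C\neq\emptyset$ for every $v$, reduces by translational symmetry to a finite check on representatives of residue classes modulo $\Lambda$, and it is made transparent by the choice of diagonal spacing: every vertex lies within hex-distance $r$ of some codeword on a neighboring diagonal.

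The main difficulty, as is usual for constructions of this type, is verifying distinguishability: for every distinct $u,v\in V(G_H)$, the symmetric difference $B_r(u)\triangle B_r(v)$ must contain a codeword. My plan is to split into regimes based on $d(u,v)$. When $d(u,v)>2r$, the balls $B_r(u)$ and $B_r(v)$ are disjoint, so the covering property already distinguishes them. When $d(u,v)$ is moderate, the set $B_r(u)\setminus B_r(v)$ contains a region large enough relative to $|F|$ to force a codeword by the covering argument applied locally. The delicate case, and the one I expect to be the main obstacle, is small $d(u,v)$, where the symmetric difference is a thin "crescent" along the boundary of one of the balls and may contain few candidate positions for codewords. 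Here I would carry out a finite case analysis on the joint residue class of $(u,v)$ modulo $\Lambda$, exploiting the rotational and translational symmetries of $G_H$ and of the pattern to collapse the cases, and using the deliberately placed separator codewords in $F$ to ensure that each remaining nearby pair is split.

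The odd case is handled by the same strategy with a slightly enlarged period, which accounts for the factor $(r+1)^2$ in the denominator and the extra codewords in the numerator; the covering and crescent arguments adapt verbatim, and only the residue-class bookkeeping in the small-$d(u,v)$ case needs to be redone. Throughout, I would assume the previously known lower bound $2/(5r)-o(1/r)$ and upper bound $8/(9r)+o(1/r)$ from \cite{Charon2001} only for orientation, since the new construction must be built from scratch.
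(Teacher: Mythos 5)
Your proposal is a plan rather than a proof, and the plan has a load-bearing step that fails. The central issue is your treatment of ``moderate'' $d(u,v)$: you claim that $B_r(u)\setminus B_r(v)$ then ``contains a region large enough relative to $|F|$ to force a codeword by the covering argument applied locally.'' The covering property only guarantees a codeword inside every \emph{ball} of radius $r$; it says nothing about an arbitrary large region. For a code of density $\Theta(1/r)$ whose codewords are concentrated on widely separated lines (as any code meeting the stated density essentially must be --- the paper's $C'$ lives only on the rows $L_{n(r+1)}$), a crescent of $\Omega(r^2)$ vertices can easily sit entirely between two codeword-carrying rows and contain nothing. Unless the crescent contains an entire radius-$r$ ball --- which it does not for moderate $d(u,v)$ --- no covering argument applies, and this is precisely where the real work lives. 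The paper replaces this step with a structural decomposition that your distance-based regimes do not capture: every vertex is within distance $r$ of \emph{exactly one} codeword row $L_{n(r+1)}'$ (because consecutive such rows are at hex-distance $2r+1$), so vertices near different rows are distinguished for free; vertices near the same row but on opposite sides are distinguished by the sparse auxiliary set $C''$; and vertices on the same side of the same row are distinguished using combinatorial properties of the punctured row itself (e.g., that any two adjacent vertices of $L_{n(r+1)}$, and any two at distance between $r$ and $2r+1$, include a codeword). Note in particular that the pairs $C''$ must separate are reflections across a codeword row and can be far apart, so they are not reached by your ``small $d(u,v)$ tie-breaking'' category.

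Two further gaps: first, you never exhibit the pattern, and the specific counts $5r+3$ codewords per $6r(r+1)$ vertices are exactly what must be engineered so that the distinguishing lemmas hold --- the existence of such a pattern is the theorem, not a bookkeeping afterthought. Second, your proposed ``finite case analysis on the joint residue class of $(u,v)$ modulo $\Lambda$'' is not finite uniformly in $r$: the fundamental domain grows with $r$, so the analysis must be carried out parametrically in $r$ (as the paper does via its distance claims), and the hex grid in the brick-wall representation has no rotational symmetry preserving a row-based code that would collapse the cases as you hope.
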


The proof of Theorem ~\ref{maintheorem} can be found in Section ~\ref{mainthmsection}.  Section ~\ref{descriptionsection} provides an brief description of a code with the aforementioned density and gives a few basic definitions needed to describe the code.  Section ~\ref{distanceclaimssection} provides a few technical claims needed for the proof of Theorem ~\ref{maintheorem} and the proofs of these claims can be found in Section ~\ref{lemmaproofs}.

\section{Construction and Definitions}\label{descriptionsection}

\begin{figure}[ht]
\centering
\includegraphics[totalheight=0.3\textheight]{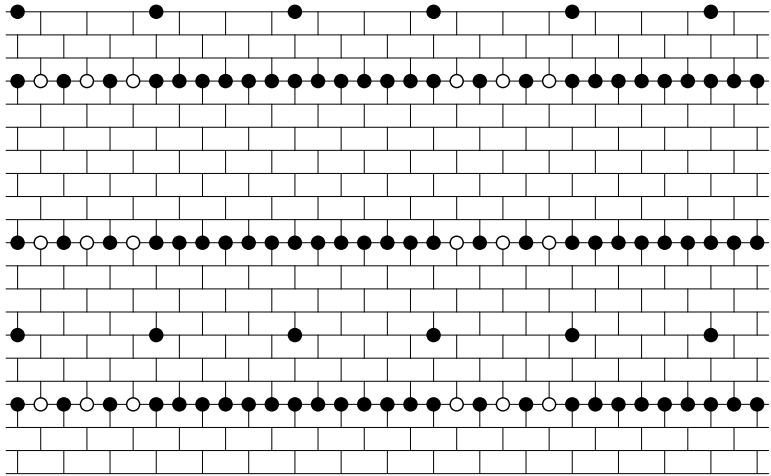}
\caption[The code $C=C'\cup C''$ for $r=6$.  Black vertices are code
words. White vertices are vertices in $L_{n(r+1)}$ which are not in
$C$.] {The code $C=C'\cup C''$ for $r=6$.  Black vertices are code
words. White vertices are vertices in $L_{n(r+1)}$ which are not in
$C$.}\label{fig:code6}
\end{figure}

\begin{figure}[ht]
\centering
\includegraphics[totalheight=0.3\textheight]{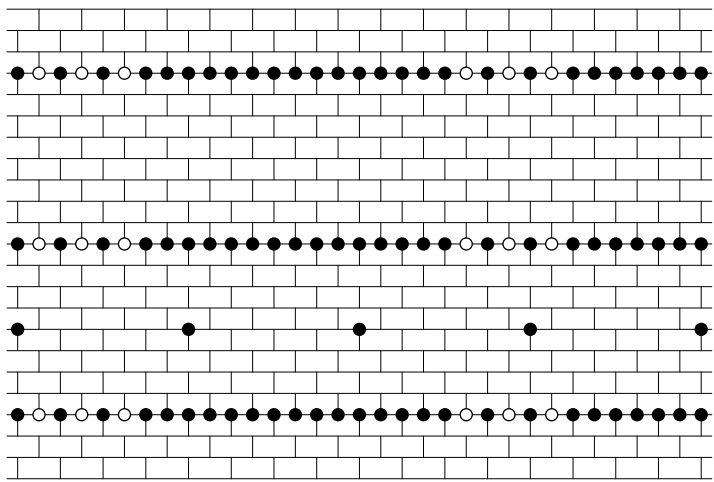}
\caption[The code $C=C'\cup C''$ for $r=7$.  Black vertices are code
words. White vertices are vertices in $L_{n(r+1)}$ which are not in
$C$.] {The code $C=C'\cup C''$ for $r=7$.  Black vertices are code
words. White vertices are vertices in $L_{n(r+1)}$ which are not in
$C$.}\label{fig:code7}
\end{figure}

For this construction we will use the brick wall representation of
the hex grid.  To describe this representation, we need to briefly consider the square grid $G_S$.
The square grid has vertex set $V(G_S)=\Z\times\Z$ and
$$E(G_S)=\{\{u=(i,j),v\}: u-v\in \{(0,
\pm 1),(\pm 1,0)\}\}.$$

Let $G_H$ represent the hex grid.  Then $V(G_H)=\Z\times\Z$ and
$$E(G_H)=\{\{u=(i,j),v\}: u-v\in \{(0,(-1)^{i+j+1}),(\pm 1,0)\}\}.$$

In other words, if $x+y$ is even, then $(x,y)$ is adjacent to
$(x,y+1),(x-1,y),$ and $(x+1,y)$.  If $x+y$ is odd, then $(x,y)$ is
adjacent to $(x,y-1),(x-1,y),$ and $(x+1,y)$. However, the first
representation shows clearly that the hex grid is a subgraph of the
square grid.

For any integer $k$, we also define a \textit{horizontal line} $L_k=\{(x,k): x\in \Z \}$.

Note that if $u,v\in V(G_S)$, then the distance between them (in
the square grid) is $\|u-v\|_1$.  From this point forward, let $d(u,v)$
represent the distance between two vertices in the hex grid.  If
$u\in V(G_H)$ and $U,V\subset(G_H)$, we define $d(u,V)=\min
\{d(u,v):v\in V\}$ and $d(U,V)=\min \{d(u,V): u\in U\}$.

Define $$L_k'=\left\{\begin{array}{cc}
                       L_k\cap \{(x,k):x\not\equiv 1,3,5,\ldots,r-1\mod 3r\},& \text{if $r$ is even}; \\
                       L_k\cap \{(x,k):x\not\equiv 1,3,5,\ldots,r-1\mod 3r-1\},& \text{if $r$ is odd} \\
                     \end{array}\right.$$
and with $\delta=0$ if $k$ is even and $\delta=1$ otherwise define $$L_k''=\left\{\begin{array}{cc}
                       L_k\cap \{(x,k):x\equiv \delta \mod r\},& \text{if $r$ is even}; \\
                       L_k\cap \{(x,k):x\equiv 0\mod r+1\},& \text{if $r$ is odd} \\
                     \end{array}\right..$$

Finally, let
$$C'=\bigcup_{n=-\infty}^\infty L_{n(r+1)}' \qquad\text{and}\qquad
C''=\bigcup_{n=-\infty}^\infty L_{\lfloor (r+1)/2\rfloor+2n(r+1)}''.$$
Let $C=C'\cup C''$.  We will show in Section ~\ref{mainthmsection} that $C$ is a valid code of the density described in Theorem ~\ref{maintheorem}.  Partial pictures of the code are shown for $r=6$ in Figure ~\ref{fig:code6} and $r=7$ in Figure ~\ref{fig:code7}.

\section{Distance Claims}\label{distanceclaimssection}

We present a list of lemmas on the distances of vertices in the hex
grid.  These lemmas will be used in the proof of our construction.
The proofs of these lemmas can be found in Section
\ref{lemmaproofs}.\vsten

\begin{claim}\label{taxicab1} For $u,v\in V(G_H)$, $d(u,v)\ge
\|u-v\|_1$.\end{claim}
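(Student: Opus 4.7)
The plan is to exploit the fact, already noted in the construction section, that $G_H$ is a spanning subgraph of the square grid $G_S$: both graphs have vertex set $\Z\times\Z$, and every edge of $G_H$ (the three allowable offsets $(0,(-1)^{i+j+1})$ and $(\pm 1,0)$) is also an edge of $G_S$. Thus any walk in $G_H$ is a walk in $G_S$ of the same length, so $d_{G_H}(u,v) \ge d_{G_S}(u,v)$.

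Next I would invoke the standard fact that the square-grid distance equals the $\ell_1$ distance: $d_{G_S}(u,v) = \|u-v\|_1$. This is immediate because each edge of $G_S$ changes exactly one coordinate by $\pm 1$, so any walk of length $\ell$ from $u$ to $v$ satisfies $\|u-v\|_1 \le \ell$ by the triangle inequality applied coordinatewise, while the obvious monotone lattice path achieves $\ell = \|u-v\|_1$.

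Combining these two observations gives $d(u,v) = d_{G_H}(u,v) \ge d_{G_S}(u,v) = \|u-v\|_1$, which is the claim. There is no real obstacle here; the only thing to be slightly careful about is verifying that the edge set of $G_H$ is indeed contained in that of $G_S$, which is immediate from the definitions given in Section \ref{descriptionsection}.
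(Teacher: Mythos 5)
Your proof is correct and follows essentially the same route as the paper: both argue that $G_H$ is a subgraph of $G_S$ on the same vertex set, so every path in $G_H$ is a path in $G_S$, and then use the fact that square-grid distance equals $\ell_1$ distance. You spell out the latter fact in slightly more detail than the paper, which simply records it as a remark before the claim, but the argument is identical.
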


\begin{proof}
Note that Lemma ~\ref{taxicab1} simply says that the distance
between any two vertices in our graph is at least as long as the
their distance in the square grid.  Since $G_H$ is a subgraph of
$G_S$, any path between $u$ and $v$ in $G_H$ is also a path in $G_S$
and so the claim follows.
\end{proof}\vsten

\begin{claim}\label{taxicab2} \textbf{(Taxicab Lemma)}
For $u=(x,y),v=(x'y')\in V(G_H)$, if $|x-x'|\ge|y-y'|$, then
$d(u,v)=\|u-v\|_1$.\end{claim}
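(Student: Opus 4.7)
The plan is to combine Claim~\ref{taxicab1}, which already provides the lower bound $d(u,v) \ge \|u-v\|_1$, with an explicit construction of a hex-grid walk from $u$ to $v$ of length exactly $\|u-v\|_1$. All of the work lies in exhibiting such a walk under the hypothesis $|x-x'| \ge |y-y'|$.

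First I would reduce, by a horizontal reflection if necessary, to the case $x' \ge x$, set $a = x' - x$ and $b = |y' - y|$, so that $a \ge b \ge 0$, and aim to build a walk consisting of $a$ right-steps and $b$ vertical steps (all in the direction from $y$ to $y'$), giving total length $a + b = \|u-v\|_1$. The key structural feature of the brick-wall hex grid is that each vertex $(i,j)$ has a \emph{unique} vertical neighbor: above when $i+j$ is even and below when $i+j$ is odd. Every edge flips the parity of the coordinate sum, so along a walk starting at $u$ the parity at step $k$ is determined entirely by $x+y$ and $k$. Consequently, for a fixed vertical direction, vertical moves are legal only at steps of one fixed parity within the walk.

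With this in hand, the construction is a scheduling problem: insert the $b$ vertical steps into a sequence of $a+b$ steps so that each lands on a position of the required parity, and fill the remaining positions with right-steps (which have no parity constraint). Since every other step admits a vertical move, there are roughly $(a+b)/2$ legal slots; we need $b$ of them, and the inequality $b \le \lceil (a+b)/2 \rceil$ simplifies to $b \le a + 1$, which is implied by $b \le a$. Depending on the parity of $x+y$ and the sign of $y'-y$, the walk either begins with a vertical step or with a right-step, but each of the four sub-cases reduces to the same arithmetic inequality.

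The only real obstacle I anticipate is the bookkeeping across those four parity/direction sub-cases and checking that the hypothesis $b \le a$ does exactly the work needed in each. Once that is verified uniformly, the explicit walk gives $d(u,v) \le a + b = \|u-v\|_1$, and Claim~\ref{taxicab1} supplies the matching lower bound, completing the proof.
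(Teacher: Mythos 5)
Your overall skeleton is exactly the paper's: the lower bound comes from Claim~\ref{taxicab1}, and everything reduces to exhibiting a walk of length $\|u-v\|_1$, which exists precisely because $|x-x'|\ge|y-y'|$ leaves enough horizontal steps to absorb the vertical ones. Where you differ is in how that walk is built. You treat it as a scheduling problem: since every edge flips the parity of the coordinate sum, vertical moves in a fixed direction are only legal at steps of one fixed parity, and you count that there are enough legal slots to place all $b$ vertical moves. The paper instead observes that, regardless of the parity of $i+j$, there is always a path of length $2$ from $(i,j)$ to $(i+1,j+1)$ (either up-then-right or right-then-up), concatenates $b$ such diagonal segments to reach $(x+(y'-y),y')$, and finishes with a horizontal run of $a-b$ steps. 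The paper's pairing trick makes the parity constraint disappear entirely, so there is no case analysis at all; your version is equally valid but carries the four-sub-case bookkeeping you anticipate. One small arithmetic point to fix when you write it out: the number of legal slots among steps $1,\dots,a+b$ is $\lceil (a+b)/2\rceil$ or $\lfloor (a+b)/2\rfloor$ depending on which parity class is needed, and in the floor case with $a+b$ odd the requirement is $b\le a-1$, not $b\le a+1$; this still follows from $b\le a$ because $a+b$ odd forces $a\neq b$, but it is not the inequality you quoted. With that repaired, your argument is complete.
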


This fact is used so frequently that we give it the name the Taxicab
Lemma.  It states that if the horizontal distance between two
vertices is at least as far as the vertical distance, then the
distance between those two vertices is exactly the same as it would
be in the square grid.

The proof of the Taxicab Lemma and the remainder of these claims can be found in
Section ~\ref{lemmaproofs}.

Claims ~\ref{vertexlinedistance} and ~\ref{vertexlinedistance2} say that the distance
between a point $(k,a)$ and a line $L_b$ is either $2|a-b|$ or $2|a-b|-1$ depending on various factors.  It also follows from these claims that $d(L_a,L_b)=2|a-b|-1$ if $a\neq b$. \vsten

\begin{claim}\label{vertexlinedistance}
Let $a<b$, then $$d((k,a),L_b)=\left\{\begin{array}{cc}
                                              2(b-a)-1, & \text{if $a+k$ is even}; \\
                                              2(b-a), & \text{if $a+k$ is odd}.
                                            \end{array}
\right.$$
\end{claim}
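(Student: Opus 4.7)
\medskip

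\noindent\textbf{Proof proposal.}
The plan is to prove matching upper and lower bounds, both driven by a single parity observation about $G_H$. Since each edge of the hex grid changes exactly one coordinate by $\pm 1$, every edge flips the parity of $x+y$. Hence along any path $v_0,v_1,\ldots,v_\ell$, the parity of the coordinate-sum of $v_i$ is $(x_0+y_0)+i \pmod 2$. Moreover, from the definition of $E(G_H)$, the unique vertical neighbor of $(x,y)$ is above $(x,y)$ when $x+y$ is even and below it when $x+y$ is odd. So every \emph{upward} vertical edge is traversed starting from a vertex whose coordinate-sum is even.

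For the lower bound, consider any path from $(k,a)$ to a vertex of $L_b$, and let $V_u$ and $V_d$ be the numbers of upward and downward vertical edges used. The net vertical displacement is $b-a$, so $V_u - V_d = b-a$ and in particular $V_u \ge b-a$. By the parity observation, every upward edge $v_{i-1} \to v_i$ must have $v_{i-1}$ of even parity, i.e.\ $i$ has a fixed parity determined by $a+k$:
\begin{itemize}
\item If $a+k$ is even, upward edges only occur at odd positions $i\in\{1,3,5,\ldots\}$, so $V_u \le \lceil \ell/2\rceil$ where $\ell$ is the path length. Combining with $V_u \ge b-a$ gives $\ell \ge 2(b-a)-1$.
\item If $a+k$ is odd, upward edges only occur at even positions $i\in\{2,4,6,\ldots\}$, so $V_u\le \lfloor \ell/2\rfloor$, giving $\ell \ge 2(b-a)$.
\end{itemize}

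For the upper bound I exhibit explicit paths achieving these lengths. When $a+k$ is even, the zig-zag $(k,a)\to(k,a+1)\to(k+1,a+1)\to(k+1,a+2)\to\cdots$ alternates one upward step with one rightward step; after $b-a$ upward and $b-a-1$ rightward edges it reaches $(k+b-a-1,\,b)\in L_b$, for a total of $2(b-a)-1$ edges, all legal since the parity of the coordinate-sum is even precisely before each upward step. When $a+k$ is odd, prepend one rightward step $(k,a)\to(k+1,a)$ to flip the parity and then apply the same zig-zag, giving a legal path of length $2(b-a)$.

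The only nontrivial step is the lower bound, where the main point is to notice that the allowed positions for upward vertical edges form an arithmetic progression of common difference $2$ whose starting position is governed by the parity of $a+k$; once this is in place, the bound drops out of $V_u \ge b-a$ with no further work. The upper bound is a direct construction.
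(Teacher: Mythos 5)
Your proof is correct, but it takes a genuinely different route from the paper's. The paper proves this claim by induction on $b-a$: it builds the short paths one layer at a time and argues minimality by observing that any path from $(k,a)$ to $L_b$ must pass through $L_{a+1}$ and that the paths from $L_a$ to $L_{a+1}$ were ``chosen optimally'' in the base case. You replace that induction with a single global parity count for the lower bound: every edge of $G_H$ flips the parity of the coordinate sum, an upward vertical edge can only leave a vertex of even coordinate sum, so the indices at which upward edges may occur along a path from $(k,a)$ form every other position (with offset determined by the parity of $a+k$), and combining this with $V_u\ge b-a$ yields $\ell\ge 2(b-a)-1$ or $\ell\ge 2(b-a)$ directly; the matching upper bound is an explicit zig-zag. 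Your lower bound is arguably the cleaner of the two: the paper's inductive minimality step quietly assumes that a shortest path to $L_b$ decomposes into a shortest path to its first vertex in $L_{a+1}$ followed by a shortest path onward (and asserts without proof that this first vertex $(j,a+1)$ has $j+a+1$ odd), whereas your counting argument applies to an arbitrary path with no decomposition needed. The upper-bound constructions in the two arguments are essentially the same zig-zag, just assembled all at once in yours and one layer at a time in the paper's.
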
 \vsten

\begin{claim}\label{vertexlinedistance2} Let $a>b$, then $$d((k,a),L_b)=\left\{\begin{array}{cc}
                                              2(a-b), & \text{if $a+k$ is even}; \\
                                              2(a-b)-1, & \text{if $a+k$ is odd}.
                                            \end{array}
\right.$$
\end{claim}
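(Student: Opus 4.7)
The plan is to prove Claim~\ref{vertexlinedistance2} in exact parallel with Claim~\ref{vertexlinedistance}, exploiting the bipartite structure of $G_H$. Color each vertex $(x,y)$ \emph{white} if $x+y$ is odd and \emph{black} otherwise. By the definition of $E(G_H)$, a vertex has a downward neighbor $(x,y-1)$ iff it is white, and every edge crosses the bipartition, so a walk starting at $(k,a)$ reaches a vertex of parity $(k+a+i)\bmod 2$ after $i$ steps. (Alternatively, the reflection $\phi(x,y)=(x+1,-y)$ is a graph automorphism of $G_H$ that sends $L_k$ to $L_{-k}$, swaps ``up'' with ``down'', and flips the parity of $x+y$; so once Claim~\ref{vertexlinedistance} is in hand, Claim~\ref{vertexlinedistance2} follows by applying $\phi$. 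Still, the direct argument sketched below is short enough to give in full.)

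For the upper bound I would write down explicit paths. If $a+k$ is odd, start at the white vertex $(k,a)$ and alternate downward and horizontal steps: $(k,a)\to(k,a-1)\to(k+1,a-1)\to(k+1,a-2)\to\cdots$. Each downward step lands on a black vertex, which a single horizontal step returns to white, enabling the next descent. After $a-b$ descents interleaved with $a-b-1$ horizontal steps the walk reaches $L_b$, giving length $2(a-b)-1$. If $a+k$ is even, prepend a single horizontal step (which reaches a white vertex) and then apply the preceding construction; the total length is $2(a-b)$.

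For the matching lower bound, consider an arbitrary walk $(k,a)=v_0,v_1,\ldots,v_n$ ending on $L_b$, and let $d$, $u$, and $h$ denote the numbers of down, up, and horizontal steps. Net vertical change forces $d-u=a-b$, so $d\ge a-b$. A step from $v_i$ is a down-step only if $v_i$ is white, i.e.\ only if $(k+a+i)\bmod 2=1$; hence down-steps occur only at step-indices $i$ of one fixed parity: even indices when $a+k$ is odd, odd indices when $a+k$ is even. Among indices $i\in\{0,\ldots,n-1\}$ there are $\lceil n/2\rceil$ even values and $\lfloor n/2\rfloor$ odd values, so $d\le\lceil n/2\rceil$ in the first subcase and $d\le\lfloor n/2\rfloor$ in the second. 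Combined with $d\ge a-b$ these give $n\ge 2(a-b)-1$ and $n\ge 2(a-b)$ respectively, matching the constructions above.

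The main obstacle is not a single computation but rather verifying that inserting up-steps cannot shorten a walk to $L_b$. The parity-of-position argument handles this uniformly: up- and down-steps are forced onto step-indices of \emph{complementary} parities, so each up-step must be compensated by an additional down-step, which strictly increases $n$. Thus the counting bound remains tight without any a priori restriction to monotone descending walks, and the two bounds coincide to give the claimed equality.
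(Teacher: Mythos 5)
Your proof is correct, and it takes a genuinely different route from the paper. The paper proves Claim~\ref{vertexlinedistance} by induction on the vertical separation (building a shortest path line by line and arguing minimality via the fact that any path must pass through each intermediate horizontal line), and then disposes of Claim~\ref{vertexlinedistance2} in one line as ``symmetric.'' You instead give a self-contained, non-inductive argument: an explicit zig-zag path for the upper bound, and for the lower bound the observation that $G_H$ is bipartite by the parity of $x+y$, so that along any walk from $(k,a)$ the positions at which a down-step is even \emph{possible} are confined to step-indices of a single fixed parity; counting those indices against the forced total $d-u=a-b$ yields $n\ge 2(a-b)-1$ or $n\ge 2(a-b)$ as appropriate. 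This parity-counting lower bound is arguably cleaner than the paper's ``any path must cross $L_{a+1}$'' minimality argument, and it handles non-monotone walks with no extra work (indeed your closing paragraph about compensating up-steps is already subsumed by the inequality $d\ge a-b$ combined with the index count, so it is redundant rather than necessary). Your parenthetical automorphism $\phi(x,y)=(x+1,-y)$ is also a valid and more precise version of the symmetry the paper invokes implicitly: it sends $L_b$ to $L_{-b}$ and flips the parity of $x+y$, which is exactly why the parity conditions in Claims~\ref{vertexlinedistance} and~\ref{vertexlinedistance2} are swapped. Either of your two arguments would stand on its own.
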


The next three claims all basically have the same idea.  If we are looking at a point $(x,y)$ and a horizontal line $L_k$ such that $(x,y)$ is within some given distance $d$, we can find a sequence $S$ of points on this line such that each point is at most distance $d$ from $(x,y)$ and the distance between each point in $S$ and its closest neighbor in $S$ is exactly 2. \vsten

\begin{claim}\label{evenvertexdistance} Let $k$ be a positive integer.
There exist paths of length $2k$ from $(x,y)$ to $v$ for each $v$ in
$$\{(x-k+2j,y\pm k): j=0,1,\ldots,k\}$$
\end{claim}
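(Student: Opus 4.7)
The plan is to exhibit the paths explicitly as alternating sequences of vertical and horizontal steps in the brick-wall layout. Two facts drive the construction: (a)~from $(x,y)$ the vertical edge points upward when $x+y$ is even and downward when $x+y$ is odd, and (b)~every edge of $G_H$ flips the parity of $x+y$, so no two consecutive steps in a walk can both be vertical. Consequently a walk of length $2k$ that alternates vertical and horizontal moves consists of exactly $k$ steps of each type.

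I would handle the upward targets $v=(x-k+2j,y+k)$ first. If $x+y$ is even, start with an up-step and alternate: up, horizontal, up, horizontal, \ldots, up, horizontal. Each up-step is legal because the preceding horizontal step restores the parity of $x+y$ to even. After the $2k$ moves the $y$-coordinate has increased by $k$ and the $x$-coordinate has changed by $\sum_{i=1}^{k}\varepsilon_i$ for some choices $\varepsilon_i\in\{-1,+1\}$; as the signs vary, this sum takes every value in $\{-k,-k+2,\ldots,k\}$, which is exactly the displacement set $\{-k+2j:j=0,1,\ldots,k\}$ required. If instead $x+y$ is odd, I prepend a single horizontal step to flip parity and use the shifted pattern horizontal, up, horizontal, up, \ldots, horizontal, up; the count of moves and the achievable horizontal displacements are identical.

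The downward targets on $L_{y-k}$ are symmetric. When $x+y$ is odd, the pattern down, horizontal, down, horizontal, \ldots, down, horizontal works (each down-step from parity-odd is legal, and the horizontal step between restores parity); when $x+y$ is even I again insert a leading horizontal step. In all four cases the constructed walk has length exactly $2k$ and terminates at the desired vertex $(x-k+2j,y\pm k)$.

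There is no real obstacle in this argument; the only item requiring care is the parity bookkeeping that certifies every intended vertical move is actually an edge of the brick-wall graph, and this is immediate from the definition of $E(G_H)$ given in Section~\ref{descriptionsection}.
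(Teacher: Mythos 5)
Your construction is correct and is at bottom the same as the paper's: both build the path from $k$ vertical steps interleaved with $k$ horizontal steps of freely chosen sign, which is exactly the composition of length-$2$ diagonal moves to $(x\pm1,y\pm1)$ that the paper uses. The only difference is packaging — you describe the alternating walk directly and count the achievable sign-sums $\sum\varepsilon_i\in\{-k,-k+2,\ldots,k\}$, whereas the paper obtains the same coverage by induction on $k$ — and your parity bookkeeping for when the vertical edge is available matches the paper's observation that a length-$2$ path to each diagonal neighbor exists regardless of the parity of $x+y$.
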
 \vsten

\begin{claim}\label{oddevenvertexdistance} Let $k$ be a positive integer and let $x+y$ be even.
There exist paths of length $2k+1$ from $(x,y)$ to $v$ for each $v$
in
$$\{(x-k+2j,y+k+1): j=0,1,\ldots,k\}\cup\{(x-k-1+2j,y-k): j=0,1,\ldots,k+1\}$$
\end{claim}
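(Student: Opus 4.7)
The plan is to reduce to Claim \ref{evenvertexdistance} by prepending a single edge to each path: a length-$(2k+1)$ path becomes one edge followed by a length-$2k$ path. Since $x+y$ is even, the three edges incident to $(x,y)$ go to $(x, y+1)$, $(x-1, y)$, and $(x+1, y)$; note that the vertical neighbor lies \emph{above}, not below, and this is what drives the asymmetry between the $k+1$ upper targets and the $k+2$ lower targets in the statement.

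For the upper row $\{(x-k+2j, y+k+1) : j=0,\ldots,k\}$, first traverse the vertical edge $(x,y) \to (x, y+1)$. Applying Claim \ref{evenvertexdistance} with base point $(x, y+1)$ and the same $k$ yields a length-$2k$ path to each vertex in $\{(x-k+2j, (y+1)+k) : j=0,\ldots,k\}$, which is precisely the upper-row target set. Concatenation then gives a length-$(2k+1)$ path to each of the $k+1$ required vertices.

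For the lower row $\{(x-k-1+2j, y-k) : j=0,\ldots,k+1\}$, no edge descends directly from $(x,y)$, so I split into two subcases and move horizontally first. Starting with the edge $(x,y) \to (x-1, y)$ and then applying Claim \ref{evenvertexdistance} from $(x-1, y)$ in the downward direction reaches the vertices $\{(x-k-1+2j, y-k) : j=0,\ldots,k\}$; starting instead with $(x,y) \to (x+1, y)$ reaches $\{(x-k+1+2j, y-k) : j=0,\ldots,k\}$, which after reindexing equals $\{(x-k-1+2j, y-k) : j=1,\ldots,k+1\}$. Together the two subcases cover $j=0,1,\ldots,k+1$, completing the argument.

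I do not anticipate any substantial obstacle: Claim \ref{evenvertexdistance} does all of the heavy lifting once the initial edge is chosen, and the only subtlety is recognizing that the parity of $(x,y)$ forces exactly one upward branch but requires two distinct horizontal branches downward, which accounts exactly for the $(k+1)$-vs-$(k+2)$ count in the statement.
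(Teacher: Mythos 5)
Your proposal is correct and follows essentially the same route as the paper: prepend the edge to $(x,y+1)$ for the upper row, and prepend the edges to $(x-1,y)$ and $(x+1,y)$ for the lower row, invoking Claim~\ref{evenvertexdistance} in each case. The only cosmetic difference is that the paper uses the $(x+1,y)$ branch just to pick up the single missing vertex $(x+k+1,y-k)$, whereas you take the whole shifted set and reindex; both yield the same covering of $j=0,1,\ldots,k+1$.
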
 \vsten

\begin{claim}\label{oddoddvertexdistance} Let $k$ be a positive integer and let $x+y$ be odd.
There exist paths of length $2k+1$ from $(x,y)$ to $v$ for each $v$
in
$$\{(x-k+2j,y-k-1): j=0,1,\ldots,k\}\cup\{(x-k-1+2j,y+k): j=0,1,\ldots,k+1\}$$
\end{claim}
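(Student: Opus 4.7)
My plan is to reduce this claim to Claim~\ref{evenvertexdistance} by prepending a single edge to the paths that claim already supplies. Since $x+y$ is odd, every neighbor of $(x,y)$---namely $(x-1,y)$, $(x+1,y)$, and $(x,y-1)$---has even coordinate sum. Claim~\ref{evenvertexdistance} carries no parity restriction, so I may invoke it with base point at any such neighbor to obtain paths of length $2k$ reaching the appropriate row. Concatenating the initial edge then yields a total length of $2k+1$, which is exactly what the claim requires.

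For the first target set $\{(x-k+2j,y-k-1):j=0,\ldots,k\}$, I would first step from $(x,y)$ to $(x,y-1)$ and then apply Claim~\ref{evenvertexdistance} centered at $(x,y-1)$ to produce paths of length $2k$ to each $(x-k+2j,(y-1)-k)=(x-k+2j,y-k-1)$ for $j=0,\ldots,k$. This handles every vertex of this set in one stroke.

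For the second target set $\{(x-k-1+2j,y+k):j=0,\ldots,k+1\}$, I would split the indices into two overlapping ranges. Taking the initial step from $(x,y)$ to $(x-1,y)$ and then applying Claim~\ref{evenvertexdistance} yields paths of length $2k$ to $((x-1)-k+2j,y+k)=(x-k-1+2j,y+k)$ for $j=0,\ldots,k$, covering indices $j\in\{0,\ldots,k\}$. Taking the initial step to $(x+1,y)$ instead yields paths of length $2k$ to $((x+1)-k+2j,y+k)=(x-k-1+2(j+1),y+k)$ for $j=0,\ldots,k$, covering indices $j'=1,\ldots,k+1$. Together these two choices cover every $j'\in\{0,1,\ldots,k+1\}$.

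There is no genuine obstacle here: the entire argument is just concatenation of one edge with paths that Claim~\ref{evenvertexdistance} already provides, and the only bookkeeping is to verify that the two index ranges $\{0,\ldots,k\}$ and $\{1,\ldots,k+1\}$ tile $\{0,\ldots,k+1\}$ and that the initial edges have the correct parity. The argument is essentially a mirror of the proof template for Claim~\ref{oddevenvertexdistance}, with the roles of the horizontal and vertical first-step swapped to reflect the change in parity of $x+y$.
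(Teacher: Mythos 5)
Your proposal is correct and follows essentially the same route as the paper: one initial edge to a neighbor of even coordinate sum, followed by an application of Claim~\ref{evenvertexdistance}, with the step to $(x,y-1)$ handling the row $y-k-1$ and the steps to $(x\mp 1,y)$ together covering all of $j=0,\ldots,k+1$ on the row $y+k$. The only cosmetic difference is that the paper invokes the $(x+1,y)$ branch just for the single rightmost vertex $(x+k+1,y+k)$, whereas you let it cover the whole index range $1,\ldots,k+1$; both are valid.
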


In the final claim, we are simply stating that if we are given a point $(x,y)$ and a line $L_k$ such that $d((x,y),L_k)<r$, we can find a path of vertices on that line which are all distance at most $r$ from $(x,y)$. \vsten

\begin{claim}\label{vertexballdistance} Let $(x,y)$ be a vertex and
$L_k$ be a line.  If $d((x,y),L_k)< r$, then
$$\{(x-(r-|y-k|)+j,k): j=0,1,\ldots, 2(r-|y-k|)\}\subset B_r((x,y))$$
\end{claim}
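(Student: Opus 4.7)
\begin{proofcite}{Proposal}
Set $m=|y-k|$; I want to show that every point $v=(x+h,k)$ with $|h|\le r-m$ lies in $B_r((x,y))$. Split the range of $h$ into a \emph{far} range $m\le|h|\le r-m$ and a \emph{near} range $|h|<m$.

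For the far range, the Taxicab Lemma (Claim~\ref{taxicab2}) applies to $u=(x,y)$ and $v=(x+h,k)$, since the horizontal separation $|h|$ is at least the vertical separation $m$. It yields
\[
d((x,y),(x+h,k))=|h|+m\le(r-m)+m=r,
\]
so $v\in B_r((x,y))$.

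For the near range I appeal to Claims~\ref{evenvertexdistance}--\ref{oddoddvertexdistance}. First, Claims~\ref{vertexlinedistance} and~\ref{vertexlinedistance2} say that $d((x,y),L_k)$ equals $2m-1$ or $2m$ depending on the parity of $x+y$ and the sign of $k-y$, so the hypothesis $d((x,y),L_k)<r$ gives $2m\le r$ in the ``favorable'' parity subcases and $2m+1\le r$ in the ``unfavorable'' ones. In each of the four parity/direction subcases, Claim~\ref{evenvertexdistance} (with parameter $m$) provides paths of length $2m$ from $(x,y)$ to $(x+h,k)$ for every $h\equiv m\pmod 2$ with $|h|\le m$. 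The complementary parity of $h$ is handled by whichever of Claim~\ref{oddevenvertexdistance} or Claim~\ref{oddoddvertexdistance} matches the parity of $x+y$: specializing its parameter so that the target line is $L_k$, one obtains paths of length $2m-1$ (favorable) or $2m+1$ (unfavorable) reaching $(x+h,k)$ for a contiguous range of $h$ of opposite parity, which together with the Claim~\ref{evenvertexdistance} points covers every integer $h$ with $|h|\le m$. Each such path length is at most $\max(2m,2m+1)\le r$ by the observation above, so all these points lie in $B_r((x,y))$.

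The far and near ranges together exhaust $\{h:|h|\le r-m\}$, proving the claim. The main obstacle is purely bookkeeping: for each of the four parity/direction subcases one must pick the correct specialization of Claim~\ref{oddevenvertexdistance} or Claim~\ref{oddoddvertexdistance} so that the reached line is really $L_k$, verify that the points listed there indeed have $h$ of opposite parity to $m$ in an interval meeting the Claim~\ref{evenvertexdistance} interval, and check that the resulting length ($2m\pm 1$) does not exceed $r$ under the appropriate sharpening of the hypothesis.
\end{proofcite}
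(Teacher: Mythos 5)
Your proof is correct, but it takes a genuinely different route from the paper's. The paper fixes $\ell=d((x,y),L_k)$ and uses exactly one of Claims~\ref{evenvertexdistance}--\ref{oddoddvertexdistance} (chosen by the parity of $\ell$) to place points spaced two apart on $L_k$ at distance exactly $\ell$; it then fills the parity gaps by observing that every intermediate vertex of $L_k$ is adjacent to one of these spaced points, hence at distance at most $\ell+1\le r$, and finally reaches the full interval $|h|\le r-|y-k|$ by a triangle inequality along $L_k$ starting from the extreme spaced points. You instead decompose by the horizontal offset $h$: the far range $|h|\ge m$ is dispatched by the Taxicab Lemma (Claim~\ref{taxicab2}) directly, which is arguably cleaner than the paper's triangle-inequality extension, while the near range is covered by pairing Claim~\ref{evenvertexdistance} (for $h\equiv m\bmod 2$) with the appropriate odd-length claim (for the opposite parity), with the length bounds $2m\le r$ or $2m+1\le r$ correctly extracted from Claims~\ref{vertexlinedistance} and~\ref{vertexlinedistance2}. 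Both arguments are sound; yours trades the paper's one-line adjacency observation for a second invocation of the path claims. The only point needing care beyond the bookkeeping you already flag is the degenerate case $m=1$ in the favorable parity, where your recipe would call Claim~\ref{oddevenvertexdistance} or~\ref{oddoddvertexdistance} with parameter $0$, outside their stated hypothesis that the parameter be a positive integer; there the lone near-range vertex $(x,k)$ is simply adjacent to $(x,y)$, so the claim still holds.
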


\section{Proof of Main Theorem}\label{mainthmsection}Here, we wish to show that the set described in Section~\ref{descriptionsection} is indeed a valid $r$-identifying code. \vsten

\begin{proofcite}{Theorem~\ref{maintheorem}}

Let $C=C'\cup C''$ be the code described in Section ~\ref{descriptionsection}.
 Let $d(C)$ be the density of $C$ in $G_H$,
$d(C')$ be the density of $C'$ in $G_H$ and $d(C'')$ be the density
of $C''$ in $G_H$.  Since $C'$ and $C''$ are disjoint, we see that

\begin{eqnarray*}
  d(C) &=& \limsup_{m\rightarrow\infty}\frac{|C\cap G_m|}{|G_m|} \\
  &=& \limsup_{m\rightarrow\infty}\frac{|(C'\cup C'')\cap
  G_m|}{|G_m|} \\
  &=& \limsup_{m\rightarrow\infty}\frac{|C'\cap
  G_m|}{|G_m|}+\limsup_{m\rightarrow\infty}\frac{|C''\cap
  G_m|}{|G_m|} \\
  &=& d(C')+d(C'')
\end{eqnarray*}

It is easy to see that both $C'$ and $C''$ are periodic tilings of
the plane (and hence $C$ is also periodic).  The density of periodic
tilings was discussed By Charon, Hudry, and
Lobstein~\cite{Charon2002} and it is shown that the density of a
periodic tiling on the hex grid is
$$D(C)=\frac{\text{\# of codewords in tile}}{\text{size of tile}} .$$

For $r$ even, we may consider the density of $C'$ on the tile
$[0,3r-1]\times[0,r]$. The size of this tile is $3r(r+1)$. On this
tile, the only members of $C'$ fall on the horizontal line $L_0$,
of which there are $2r+r/2$.

For $r$ odd, we may consider the density of $C'$ on the tile
$[0,3r-2]\times[0,r]$. The size of this tile is $(3r-1)(r+1)$.  On
this tile, the only members of $C'$ fall on $L_0$, of which there
are $2r+(r-1)/2$. Thus

$$d(C')=\left\{\begin{array}{cc}
                \frac{5}{6(r+1)} & \text{if $r$ is even} \\
                \frac{5r-1}{(6r-2)(r+1)} & \text{if $r$ is odd}
              \end{array}
\right.$$

For $C''$, we need to consider the tiling on $[0,r-1]\times[0,2r+1]$
if $r$ is even and $[0,r]\times[0,2r+1]$ if $r$ is odd.  In either
case, the tile contains only a single member of $C''$.  Hence,
$$d(C'')=\left\{\begin{array}{cc}
                \frac{1}{2r(r+1)} & \text{if $r$ is even} \\
                \frac{1}{2(r+1)^2} & \text{if $r$ is odd}
              \end{array}
\right.$$  Adding these two densities together gives us the numbers
described in the theorem.\vsten

It remains to show that $C$ is a valid code.  We will say that a
vertex $v$ is \emph{nearby} $L_k'$ if $I_r(v)\cap L_k'\neq
\emptyset$. An outline of the proof is as follows:\vsten
\begin{enumerate}
  \item Each vertex $v\in V(G_H)$ is nearby $L_{n(r+1)}'$ for
  exactly one value of $n$ (and hence, $I_r(v)\neq\emptyset$).
  \item Since each vertex is nearby $L_{n(r+1)}'$ for exactly one value
  of $n$, we only need to distinguish between vertices which are
  nearby the same horizontal line.
  \item The vertices in $C''$ distinguish between vertices that fall
  above the horizontal line and those that fall below the line.
  \item We then show that $L_{n(r+1)}'$ can distinguish between any
  two vertices which fall on the same side of the horizontal line (or in the
  line).
\end{enumerate}\vsten

\textbf{Part (1)}: We want to show that each vertex is nearby $L_{n(r+1)}$ for
exactly one value of $n$.

From Claim \ref{vertexlinedistance}, it immediately follows that
$d(L_{n(r+1)},L_{(n+1)(r+1)})=2r+1$.  So by the triangle inequality,
no vertex can be within a distance $r$ of both of these lines. Thus,
no vertex can be nearby more than one horizontal line of the form
$L_{n(r+1)}$.

\begin{claim}\label{codewordclaim}
Let $u,v\in L_{n(r+1)}$. Then:

\begin{enumerate}
  \item If $u\sim v$ then at least one of $u$ and $v$ is in $C'$.
  \item If $r\le d(u,v)\le 2r+1$ then at least one of $u$ and $v$
  is in $C'$.
\end{enumerate}
  Furthermore, note that for any $x$ at least one of the following
vertices is in $C$:
$$\{(x+2k,n(r+1)): k=0,1,\ldots,\lceil(r+1)/2\rceil\} .$$
\end{claim}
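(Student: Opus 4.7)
The plan is to prove all three parts by analyzing the pattern of $C' \cap L_{n(r+1)}$ directly, since $C''$ does not meet any line $L_{n(r+1)}$: indeed $\lfloor(r+1)/2\rfloor + 2m(r+1) \equiv \lfloor(r+1)/2\rfloor \pmod{r+1}$, which is never $0$, so $C \cap L_{n(r+1)} = C' \cap L_{n(r+1)}$. The non-codeword $x$-coordinates on this line are exactly the residues in $\{1,3,\ldots,r-1\}$ modulo $\mu$, where $\mu=3r$ if $r$ is even and $\mu=3r-1$ if $r$ is odd. Two key features of this residue set will do all the work: (a) every excluded residue is odd, and (b) within a period the excluded residues form a block of $\lceil r/2\rceil$ consecutive odd numbers.

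For part (1), I note that the excluded residues are all odd and pairwise separated by at least $2$, so no two consecutive integers can both be non-codeword positions. An edge $u\sim v$ on $L_{n(r+1)}$ must have $x$-coordinates differing by $1$, so at least one endpoint lies in $C'$.

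For part (2), the Taxicab Lemma (Claim~\ref{taxicab2}) gives $d(u,v)=|x_u-x_v|$ whenever $u,v$ lie on a common horizontal line. I will show that the difference between any two non-codeword $x$-coordinates avoids the interval $[r,2r+1]$. Within a single period the difference is at most $r-2$ (for $r$ even) or $r-3$ (for $r$ odd), each less than $r$; while across periods the difference is at least $\mu-(r-2)=2r+2$ in the even case and $(3r-1)-(r-3)=2r+2$ in the odd case, both exceeding $2r+1$. Consequently, no pair of non-codewords has distance in $[r,2r+1]$.

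For the ``furthermore'' statement, I will split on the parity of $x$. If $x$ is even then every $x+2k$ is even and so automatically a codeword (since every excluded residue is odd), and the conclusion is immediate. If $x$ is odd, then all $x+2k$ are odd, and I will use the fact that the longest run of consecutive odd non-codeword positions is exactly $r/2$ for $r$ even and $(r-1)/2$ for $r$ odd. Since the sample contains $\lceil(r+1)/2\rceil+1$ consecutive odd positions, and this count strictly exceeds the maximal non-codeword run in each case, the window must contain at least one codeword. The main bookkeeping obstacle is only the need to handle the two parities of $r$ separately, but because the residue picture is essentially the same up to the choice of modulus $\mu$, each parity reduces to a short arithmetic check.
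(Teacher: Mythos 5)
Your proof is correct and, for parts (1) and (2), follows essentially the same route as the paper: the non-codewords of $L_{n(r+1)}$ are exactly the positions with odd residue in a block of width less than $r$ inside each period of the (even) modulus $\mu$, so adjacent vertices cannot both be non-codewords, and two non-codewords lie at distance $<r$ or $\ge 2r+2$. The only divergence is the ``furthermore'' statement, where the paper simply applies part (2) to the two endpoints $x$ and $x+2\lceil(r+1)/2\rceil$, whose distance lies in $\{r+1,r+2\}\subset[r,2r+1]$; your parity-plus-run-length count is a valid (slightly longer) alternative, and your preliminary observation that $C''$ misses $L_{n(r+1)}$, while true, is not needed, since exhibiting a codeword of $C'$ already exhibits one of $C\supset C'$.
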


The proof of this claim is in Section ~\ref{lemmaproofs}.

From this point forward, let $v=(i,j)$ with $n(r+1)\le j <
(n+1)(r+1)$.

First, suppose that $r$ is even.  Then for $j\le r/2+ n(r+1)$ we have
$d((i,j) ,(i-r/2,j-r/2))=r$ by the Taxicab Lemma. Since $j-r/2\le
n(r+1)$, there is $u\in L_{n(r+1)}$ such that $d((i,j),u)\le r$.  If
$d((i,j),u)<r$, then $u$ and $u+(1,0)$ are both within distance $r$
of $(i,j)$ and by Claim ~\ref{codewordclaim} one of them is in $C'$.  If
$d((i,j),L_{n(r+1)})=r$, then by Claim \ref{evenvertexdistance}
$(i,j)$ is within distance $r$ of one of the vertices in the set
$\{(i-r/2+2k: k=0,1,\ldots,r/2\}$ and again by Claim ~\ref{codewordclaim} one of them must be in $C'$.
By a symmetric argument, we can show that if $j>r/2$, then there is
a $u\in C'\cap L_{(n+1)(r+1)}$ such that $d((i,j),u)\le r$.

If $r$ is odd, and $j\neq (r+1)/2$, then we can use the same
argument to show there is a codeword in $C'\cap L_{(n+1)(r+1)}$ or
$C'\cap L_{(n+1)(r+1)}$ within distance $r$ of $(i,j)$.  If
$j=(r+1)/2$, then we refer to Claims \ref{oddevenvertexdistance},
\ref{oddoddvertexdistance} and \ref{codewordclaim} to find an appropriate codeword.  Since
no vertex can be nearby more than one horizontal line of the form
$L_{n(r+1)}$, this shows that each vertex is nearby exactly one
horizontal line of this form.

So we have shown that $v$ is nearby exactly one horizontal line of
the form $L_{n(r+1)}'$, but this also shows that  $I_r(v)\neq
\emptyset$ for any $v\in V(G_H)$.

\textbf{Part (2)}: We now turn our attention to showing
that $I_r(u)\neq I_r(v)$ for any $u\neq v$. We first note that if $u$ is nearby $L_{n(r+1)}'$ and $v$ is nearby
$L_{m(r+1)}'$ and $m\neq n$, then there is some c in $I_r(v)\cap
L_{m(r+1)}'$ but $c\not\in I_r(u)$ since $u$ is not nearby
$L_{m(r+1)}'$.  Thus, $u$ and $v$ are distinguishable.
Hence it suffices to consider only the case that
$n=m$.

\textbf{Part (3)}:  We consider the case that $u$ and $v$ fall on opposite sides
of $L_{n(r+1)}$.  Suppose that $u=(i,j)$ where $j>n(r+1)$ and
$v=(i',j')$ where $j'<n(r+1)$. We see that if $n=2k$, then
$n(r+1)<\lfloor(r+1)/2\rfloor+2k(r+1)< (n+1)(r+1)$ and if $n=2k+1$
then $(n-1)(r+1)<\lfloor(r+1)/2\rfloor+2k(r+1)< n(r+1)$.  In the
first case, we see that
\begin{eqnarray*}
    d((i',j'),L_{\lfloor(r+1)/2\rfloor+2k(r+1)}) &\ge& d(L_{2k(r+1)-1},L_{\lfloor(r+1)/2\rfloor+2k(r+1)})\\
    &=&  2\lfloor (r+1)/2\rfloor + 1\\
    &\ge& r+1
\end{eqnarray*}
and likewise in the second case we have that $d((i,j),
L_{\lfloor(r+1)/2\rfloor+2k(r+1)})\ge r+1$ and so it follows that at
most one of $(i,j)$ and $(i',j')$ is within distance $r$ of a code
word in $C''$.

If $r$ is odd, then $\lfloor(r+1)/2\rfloor = (r+1)/2$ and
$|j-(r+1)/2+2k(r+1)|\le(r-1)/2$.  Hence, we see that
$d((i,j),L_{(r+1)/2+2k(r+1)})\le r-1$ .  From Claim
\ref{vertexballdistance}, it follows that $$\{(m,(r+1)/2+n(r+1)):
(r-1)/2\le m \le (3r-1)/2\}\subset B_r((i,j)) .$$  We note that the
cardinality of this set is at least $r+1$ and so in the case that
$n$ is even we see that at least one of these must be in $C''$.  A
symmetric argument applies if $n$ is odd, showing that $(i',j')$ is
within distance $r$ of a codeword in $C''$.

If $r$ is even, we apply a similar argument to show that $(i,j)$ is
within distance $r-1$ of $r$ vertices in
$L_{n(r+1)+\lfloor(r+1)/2\rfloor}$ and $(i',j')$ is within distance
$r$ of $r-1$ vertices of $L_{n(r+1)-\lceil(r+1)/2\rceil}$.  If $n$
is even, then clearly $(i,j)$ is within distance $r$ of some code
word in $C''$. However, if $n$ is odd, we have only shown that
$$\{(m,\lceil(r+1)\rceil/2+n(r+1)): r/2-2\le m \le 3r/2-1\}\subset
B_{r-1}((i,j)) .$$  Consider the set
$$\{(m,\lfloor(r+1)\rfloor/2+n(r+1)): r/2-2\le m \le 3r/2-1\}.$$  This set has $r$ vertices and so 1 of them must
be a codeword.  Furthermore, by the way we constructed $C''$, the
sum of the coordinates of codeword are even and so there is an edge
connecting it to a vertex in $$\{(m,\lceil(r+1)\rceil/2+n(r+1)):
r/2-2\le m \le 3r/2-1\}$$ and so it is within radius $r$ of
$(i',j')$.  In either case, we have exactly one of $u$ and $v$ is
within distance $r$ of a codeword in $C''$.

\textbf{Part (4)}: Now we have shown that two vertices are distinguishable if they are
nearby two different lines in $C'$ or if they fall on opposite sides
of a line $L_{n(r+1)}$ for some $n$.  To finish our proof, we need
to show that we can distinguish between $u=(i,j)$ and $v=(i',j')$ if
$u$ and $v$ are nearby the same line $L_{n(r+1)}'$ and $u$ and $v$
fall on the same side of that line.  Without loss of generality,
assume that $j,j'\ge n(r+1)$.

\textbf{Case 1}: $u,v\in L_{n(r+1)}$

Without loss of generality, we can write $u=(x,n(r+1))$ and
$v=(x+k,n(r+1))$ for $k>0$.  If $k>1$ then $(x-r,n(r+1))$ and
$(x-r+1,n(r+1))$ are both within distance $r$ of $u$ but not of $v$
and one of them must be a codeword by Claim \ref{codewordclaim}.  If $k=1$, then $(x-r,n(r+1))$
is within distance $r$ of $u$ but not $v$ and $(x+r+1,n(r+1))$ is
within distance $r$ of $v$ but not $u$. Since
$d((x-r,n(r+1)),(x+r+1,n(r+1)))=2r+1$, Claim \ref{codewordclaim} states that at least one of them must be a
codeword.

\textbf{Case 2}: $u\in L_{n(r+1)}$, $v\not\in L_{n(r+1)}$

Without loss of generality, assume that $i\le i'$.  We have
$u=(i,n(r+1))$.  If $i=i'$ then $v=(i,n(r+1)+k)$ for some $k>0$.
Consider the vertices $(i-r,n(r+1))$ and $(i+r,n(r+1))$.  These are
each distance $r$ from $u$ and they are distance $2r$ from each
other, so by Claim \ref{codewordclaim} at least one of them is a codeword.  However, by Claim
\ref{taxicab1}, these are distance at least $r+k>r$ from $v$ and so
$I_r(u)\neq I_r(v)$.

If $i<i'$, then we can write $v=(i+j, n(r+1)+k)$ for $j,k>0$.
Consider the vertices $x_1=(i-r, n(r+1))$ and $x_2=(i-r+1,n(r+1))$.
By the Taxicab Lemma, $d(u,x_1)=r$ and $d(u,x_2)=r-1$.  Further,
since they are adjacent vertices in $L_{n(r+1)}$, one of them is in
$C'$ by Claim \ref{codewordclaim}.  However, by Claim \ref{taxicab1} we have $d(v,x_1)\ge
r+j+k>r$ and $d(v,x_2)\ge r-1+j+k>r$.  Hence, neither of them is in
$I_r(v)$ and so $I_r(u)\neq I_r(v)$.

\textbf{Case 3}: $u,v\not\in L_{n(r+1)}$

Assume without loss of generality that $n=0$ and so $1\le j,j'\le
\lceil r/2\rceil$.

\textbf{Subcase 3.1}: $j<j'$, $i=i'$

From Claim \ref{vertexlinedistance2} it immediately follows that
$d((i,j),L_0)<d((i',j'),L_0)\le r$.  By the Taxicab Lemma, we have
$d((i-(r-j),0),(i,j))=d((i+(r-j),0),(i,j))=r$. Further note that
$d((i-(r-j),(i+(r-j),0))=2(r-j)$ and since $1\le j<\lceil r/2\rceil$
we have $r+1\le 2(r-j)\le 2r-2$ and so at least one of these two vertices is
a codeword by Claim \ref{codewordclaim}.  By the Taxicab Lemma, neither of these two vertices is
in $B_r((i',j'))$.

\textbf{Subcase 3.2}: $j<j'$, $i\neq i'$

Without loss of generality, we may assume that $i<i'$. Then, we wish
to consider the vertices $(i-(r-j),0)$ and $(i-(r-j)+1,0)$.  We see
that $d((i,j),(i-(r-j)+1,0))=r-1$ and so by the Taxicab Lemma,
neither of these vertices is in $B_r((i',j'))$.  But since these
vertices are adjacent and both in $L_0$ at least one of them is a
codeword by Claim \ref{codewordclaim} and so that one is in $I_r((i,j))$ but not in
$I_r((i',j'))$.

\textbf{Subcase 3.3}: $j=j'$, $d(u,L_0)<r$ or $d(v,L_0)<r$

Without loss of generality, assume that $d((i,j),L_0)<r$ and that
$i<i'$.  Then $v=(i+k,j)$ for some $k>0$.

First suppose that $k>1$. Then, we wish to consider the vertices
$(i-(r-j),0)$ and $(i-(r-j)+1,0)$. We see that
$d((i,j),(i-(r-j)+1,0))=r-1$ and so by the Taxicab Lemma, neither of
these vertices is in $B_r((i',j'))$. But since these vertices are
adjacent and both in $L_0$ so by Claim \ref{codewordclaim} at least one of them is a codeword and so
that one is in $I_r((i,j))$ but not in $I_r(i',j'))$.

If $k=1$, consider the vertices $x_1=(i-(r-j),0)$ and
$x_2=(i+1+(r-j),0)$.  We see that
$$\begin{array}{ll}
  d(x_1,u) &= r \\
  d(x_1,v) &= r+1 \\
  d(x_2,u) &= r+1 \\
  d(x_2,v) &= r
\end{array}$$
all by the taxicab lemma.  Furthermore, $d(x_1,x_2)=2(r-j)+1$ and so
$r+1\le d(x_1,x_2)\le 2r+1$ as in the above case and so one of them is
a code word by Claim \ref{codewordclaim}.  Hence $I_r(u)\neq I_r(v)$.

\textbf{Subcase 3.4}: $j=j'$, $d(u,L_0)=d(v,L_0)=r$

Without loss of generality, suppose $i'<i$.  We wish to consider the vertices
in the two sets
$$U=\{(i-(r-j)+2k,0): k=0,1,2,\ldots,r-j\}$$ and
$$V=\{(i'-(r-j)+2k,0):k=0,1,2,\ldots,r-j\}.$$  Note that
$$|U|=|V|=\left\{\begin{array}{cc}
                         r/2+1 & \text{if $r$ is even} \\
                         (r+1)/2 & \text{if $r$ is odd}
                       \end{array}
\right.$$

From Claims \ref{evenvertexdistance},
\ref{oddevenvertexdistance}, and \ref{oddoddvertexdistance} we have
$U\subset B_r(u)$ and $V\subset B_r(v)$. Each of $U$ and $V$ contain a codeword by Claim \ref{codewordclaim}.  Hence, if $U\cap
V=\emptyset$  then  $u$ and
$v$ are distinguishable.



If $U\cap V\neq \emptyset$ then the leftmost vertex in $U$ is also in $V$.  We further note that the leftmost vertex in $U$ is not the leftmost vertex in $V$ or else $U=V$ and $u=v$.  Thus, we must have $i'-(r-j)+2\le i\le i'+(r-j)$ and so $2\le i-i'\le 2(r-j)$.  Let
$x_1=(i+(r-j),0)$ and let $x_2=(i'-(r-j),0)$.  By definition $x_1\in
U$ and $x_2\in V$.  By the Taxicab Lemma we have $d(x_1,v)=|i+r-j-i'|+|j|=r+i-i'>r$ and likewise $d(x_2,u)>r$ so $x_1\not\in I_r(v)$ and $x_2\not\in I_r(u)$.  Also we have $d(x_1,x_2)=2(r-j)+i-i'$ which gives $2(r-j)+2\le d(x_1,x_2)\le 4(r-j)$.  Since $d(u,L_0)=r$ we must have $r-j=r/2$ if $r$ is even and $r-j=(r-1)/2$ if $r$ is odd by Claims ~\ref{vertexlinedistance} and ~\ref{vertexlinedistance2}.  In either case this gives
$r+2\le d(x_1,x_2)\le 2r$. By Claim \ref{codewordclaim}, one of $x_1$ and $x_2$ is a
codeword and so $u$ and $v$ are distinguishable.

This completes the proof of Theorem \ref{maintheorem}.
\end{proofcite}

\section{Proof of Claims}\label{lemmaproofs}

We now complete the proof by going back and finishing off the proofs of the claims and lemmas that were presented in Section \ref{distanceclaimssection}.\vsten

\begin{proofcite}{Claim~\ref{taxicab2}}\textbf{(Taxicab Lemma)}
By Claim ~\ref{taxicab1}, it suffices to show there exists a path of length $\|u-v\|$ between $u$ and $v$.  Since $|x-x'|\ge|y-y'|$, either $u=v$ or $x\neq x'$. If
$u=v$ then the claim is trivial, so assume without loss of
generality that $x< x'$.  We will first assume that $y\le y'$.

We note that no matter what the parity of $i+j$, there is always a
path of length 2 from $(i,j)$ to $(i+1,j+1)$.  Then for $1\le i \le
|y-y'|$, define $P_i$ to be the path of length 2 from
$(x+i-1,y+i-1)$ to $(x+i,y+i)$.  Then $\bigcup_{i=1}^{|y-y'|} P_i$
is a path of length $2(y'-y)$ from $(x,y)$ to $(x+y'-y, y')$.  Since $(i,j)\sim (i+1,j)$ for all $i,j$, there is a path  $P'$ of
length $x'-x+y-y'$ (Note: this number is nonnegative since
$x'-x\ge y'-y$.) from $(x+y'-y, y')$ to $(x',y')$ by simply moving
from $(i,y')$ to $(i+1,y')$ for $x+y'-y\le i< x'$.  Then, we
calculate the total length of $$P'\cup \bigcup_{i=1}^{|y-y'|}P_i$$
to be $|x'-x|+|y'-y|$.  By Claim \ref{taxicab1}, it follows that
$d(u,v)= |x-x'|+|y-y'|=\|u-v\|_1$.

If $y>y'$, then a symmetric argument follows by simply following a
downward diagonal followed by a straight path to the right.
\end{proofcite}\vsten

\begin{proofcite}{Claim~\ref{vertexlinedistance}}
We proceed by induction on $b-a$.

The base case is trivial since if $a+k$ is even, then
$(k,a)\sim(k,a+1)=(k,b)$.  If $a+k$ is odd, then there is no edge
directly to $L_b$ and so any path from $(k,a)$ to $L_b$ has length
at least 2.  This is attainable by the path from $(k,a)$ to $(k+1,a)$ to $(k+1,b)$.

The inductive step follows similarly.  Let $a+k$ be even.  Then
there is an edge between $(k,a)$ and $(k,a+1)$.  Since $k+a+1$ is
odd, the shortest path from $(k,a+1)$ to $L_b$ has length $2(b-a-1)$
and so the union of that path with the edge $\{(k,a),(k,a+1)\}$
gives a path of length $2(b-a)-1$.  Likewise, if $k+a$ is odd, then
we take the path from $(k,a)$ to $(k+a,a)$ to $(k+1,a+1)$ union a path from
$(k+1,a+1)$ to $L_b$ gives us a path of length $2(b-a)$.  Further, any path from $L_a$ to $L_b$ must contain at least one
point in $L_{a+1}$.  So if we take a path of length $\ell$ from
$(k,a)$ to $(j,a+1)$ (note that $j+a+1$ must be odd), then we get a
path of length $\ell+2(b-a-1)$.  From the base case, we have chosen
our paths from $L_a$ to $L_{a+1}$ optimally and so this path is
minimal.
\end{proofcite}\vsten

\begin{proofcite}{Claim~\ref{vertexlinedistance2}}
The proof is symmetric to the previous proof.
\end{proofcite}\vsten

\begin{proofcite}{Claim~\ref{evenvertexdistance}}
  The proof is by induction on $k$.  If $k=1$, then as noted before,
  there is always a path of length 2 from $(x,y)$ to $(x\pm 1, y\pm
  1)$.

  For $k>1$, there is a path of length 2 from $(x,y)$ to $(x-1,y+1)$
  and to $(x+1,y+1)$.  By the inductive hypothesis, there is a path of
  length $2(k-1)$ from $(x-1, y+1)$ to each vertex in
  $$S=\{(x-k+2j,y+k):j=0,1,\ldots,k-1\}$$ and a path of
  length $2(k-1)$ from $(x+1, y+1)$ to $(x+k,y+k)$.  Taking the
  union of the path of length 2 from $(x,y)$ to $(x-1,y+1)$ and the
  path of length $2(k-1)$ from $(x-1,y+1)$ to each vertex in $S$
  gives us a path of length $2k$ to each vertex in $$\{(x-k+2j,y+ k): j=0,1,\ldots,k-1\} .$$
  Then, taking the path of length 2 from $(x,y)$ to $(x+1,y+1)$ and
  the path of length $2(k-1)$ from $(x+1,y+1)$ to $(x+k,y+k)$ gives
  us a path of length $2k$ from $(x,y)$ to each vertex in $$\{(x-k+2j,y+ k): j=0,1,\ldots,k\}.$$

  By a symmetric argument, we can find paths of length $2k$ from
  $(x,y)$ to each vertex in $$\{(x-k+2j,y-k): j=0,1,\ldots,k\} .$$
\end{proofcite}\vsten

\begin{proofcite}{Claim~\ref{oddevenvertexdistance}}
Since $x+y$ is even, $(x,y)\sim(x,y+1)$.  By Claim ~\ref{evenvertexdistance}, there are paths of length $2k$ from $(x,y+1)$ to each vertex in $\{(x-k+2j,y+k+1): j=0,1,\ldots,k\}$ and hence there are paths of length $2k+1$ from $(x,y)$ to each vertex in that set.

Since $(x,y)\sim(x-1,y)$, by Claim ~\ref{evenvertexdistance}, there
are paths of length $2k$ from $(x-1,y)$ to each vertex in
$\{(x-k-1+2j,y-k): j=0,1,\ldots,k\}$.  Similarly, since
$(x,y)\sim(x+1,y)$ there is a path of length $2k$ from $(x+1,y)$ to
$(x+k+1,y-k)$ and so there is a path of length $2k+1$ from $(x,y)$
to each vertex in $\{(x-k-1+2j,y-k): j=0,1,\ldots,k+1\}$.
\end{proofcite}\vsten

\begin{proofcite}{Claim~\ref{oddoddvertexdistance}}
Since $x+y$ is odd, $(x,y)\sim(x,y-1)$.  By Claim
~\ref{evenvertexdistance}, there are paths of length $2k$ from
$(x,y-1)$ to each vertex in $\{(x-k+2j,y-k-1): j=0,1,\ldots,k\}$ and
hence there are paths of length $2k+1$ from $(x,y)$ to each vertex
in that set.

Since $(x,y)\sim(x-1,y)$, by Claim ~\ref{evenvertexdistance}, there
are paths of length $2k$ from $(x-1,y)$ to each vertex in
$\{(x-k-1+2j,y+k): j=0,1,\ldots,k\}$.  Similarly, since
$(x,y)\sim(x+1,y)$ there is a path of length $2k$ from $(x+1,y)$ to
$(x+k+1,y+k)$ and so there is a path of length $2k+1$ from $(x,y)$
to each vertex in $\{(x-k-1+2j,y+k): j=0,1,\ldots,k+1\}$.
\end{proofcite}\vsten

\begin{proofcite}{Claim~\ref{vertexballdistance}}
Without loss of generality, assume that $y\ge k$.  If $y=k$, the
Claim is trivial, so let $\ell>0$ be the length of the
shortest path between $(x,y)$ and $L_k$.  By assumption, $\ell\le
r-1$.

First assume that $\ell$ is even.  By Claim
~\ref{evenvertexdistance}, there is a path of length $\ell$ from
$(x,y)$ to each vertex in
$S=\{((x-\ell/2+2j,k):j=0,1,\ldots,\ell/2\}$.  Note that the
vertices in the set
$S'=\{((x-\ell/2+2j-1,k):j=0,1,\ldots,\ell/2+1\}$ all fall within
distance 1 of a vertex in $S$ and so $S\cup S' =
\{((x-\ell/2-1+j,k):j=0,1,\ldots,\ell+2\}\subset B_r((x,y))$.

Now note that $d((x,y),(x-\ell/2,k))=\ell$, so if $x-(r-|y-k|)\le
x_0\le x-\ell/2$ for some $x_0$, then
\begin{eqnarray*}
d((x_0,k),(x,y))&\le& d((x_0,k),(x-\ell/2,k))+d((x-\ell/2,k),(x,y))
\\
&\le& (r-|y-k|-\ell/2)+\ell .
\end{eqnarray*}
Since $|y-k|=\ell/2$ this gives $d((x_0,k),(x,y))\le r$.  Likewise,
if $x+\ell/2\le x_0\le x+(r-|y-k|)$ for some $x_0$, then
$d((x_0,k),(x,y))\le r$.  Hence, $d((x_0,k),(x,y))\le r$ for all
$x-(r-|y-k|)\le x_0\le x+(r-|y-k|)$ and so the Claim follows.

If $\ell$ is odd, the Claim follows by using either Claim
~\ref{oddevenvertexdistance} or Claim ~\ref{oddoddvertexdistance}
and applying a similar argument.

\end{proofcite}\vsten

\begin{proofcite}{Claim~\ref{codewordclaim}}

This first part of this claim is immediate from the definition of $C'$ since the only vertices in $L_{n(r+1)}$ which are not in $L_{n(r+1)}'$ are separated by distance 2.  To see the second part, let $\ell=3r$ if $r$ is even and $\ell=3r-1$ if $r$ is odd.  Write $L_{n(r+1)}'=S_1\cup S_2\cup S_3$ where
\begin{eqnarray*}
  S_1 &=& L_{n(r+1)}'\cap\{(x,y), x\equiv 1,\ldots, r-1 \mod \ell\}\\
  S_2 &=& L_{n(r+1)}'\cap\{(x,y), x\equiv r,r+1,\ldots, 2r-1 \mod \ell\}\\
  S_3 &=& L_{n(r+1)}'\cap\{(x,y), x\equiv 2r,2r+1,\ldots, \ell \mod \ell\}\\
\end{eqnarray*}
Note that each vertex in $S_2\cup S_3$ is in $C'$ and so if $u,v\in L_{n(r+1)}$ are both not in $C'$, they are both in $S_1$.  However, it is clear that for two vertices in $S_1$, their distance is either strictly less than $r$ or at least than $2r+1$.

Finally, for the last part of the claim, we use the same partition of $L_{n(r+1)}$ so that all non-codewords fall in $S_1$.  However, the distance between $(x,n(r+1))$ and $(x+2\lceil(r+1)/2\rceil)$ is at least $r+1$ so one of those two vertices cannot fall in $S_1$ and so it is a codeword.
\end{proofcite}

\section{Conclusion}

In an upcoming paper, we also provide improved lower bounds for $D(G_H,r)$ when $r=2$ or $r=3$~\cite{MartinSubmitted}.  Below are a couple tables noting our improvements.  This includes
the results not only from our paper, but also from the aforementioned paper.

$$\begin{array}{c}

\begin{array}{|c|c||c|c|}
  \hline
  \multicolumn{4}{|c|}{\text{Hex Grid}}\\
  \hline
  r & \text{previous lower bounds} & \text{new lower bounds} & \text{upper bounds} \\
  \hline
  2 & 2/11\approx 0.1818 ^\text{~\cite{Karpovsky1998}} & 1/5 = 0.2^\text{~\cite{MartinSubmitted}} & 4/19\approx 0.2105 ^\text{~\cite{Charon2002}}\\
  \hline
  3 & 2/17\approx 0.1176 ^\text{~\cite{Charon2001}} & 3/25 = 0.12^\text{~\cite{StantonPending}} & 1/6\approx 0.1667 ^\text{~\cite{Charon2002}}\\
  \hline
  \multicolumn{4}{|c|}{\text{Square Grid}}\\
  \hline
  2 & 3/20=0.15 ^\text{~\cite{Charon2001}} & 6/37\approx 0.1622^\text{~\cite{MartinSubmitted}} & 5/29\approx 0.1724 ^\text{~\cite{Honkala2002}} \\
  \hline
\end{array}\\
\vspace{1em}\\

\begin{array}{|c|c|c||c|}
  \hline
  \multicolumn{4}{|c|}{\text{Hex Grid}}\\
  \hline
  r & \text{lower bounds} & \text{new upper bounds} & \text{previous upper bounds} \\
  \hline
  15 & 2/77\approx 0.0260 ^\text{~\cite{Charon2001}} & 1227/22528\approx  0.0523 & 1/18\approx 0.0556 ^\text{~\cite{Charon2002}} \\
  \hline
  16 & 2/83\approx 0.0241 ^\text{~\cite{Charon2001}} & 83/1632\approx  0.0509    & 1/18\approx 0.0556^\text{~\cite{Charon2002}}\\
  \hline
  17 & 2/87\approx 0.0230 ^\text{~\cite{Charon2001}} &                           & 1/22\approx 0.0455^\text{~\cite{Charon2002}}\\
  \hline
  18 & 2/93\approx 0.0215 ^\text{~\cite{Charon2001}} & 31/684 \approx  0.0453    & 1/22\approx 0.0455 ^\text{~\cite{Charon2002}}\\
  \hline
  19 & 2/97\approx 0.0206 ^\text{~\cite{Charon2001}} & 387/8960\approx  0.0432    & 1/22\approx 0.0455 ^\text{~\cite{Charon2002}}\\
  \hline
  20 & 2/103\approx 0.0194 ^\text{~\cite{Charon2001}} & 103/2520\approx  0.0409    & 1/22\approx 0.0455 ^\text{~\cite{Charon2002}}\\
  \hline
  21 & 2/107\approx 0.0187 ^\text{~\cite{Charon2001}} &                            & 1/26\approx 0.0385 ^\text{~\cite{Charon2002}}\\
  \hline
\end{array}
\end{array}$$

For even $r\ge 22$, we have improved the bound from approximately $8/(9r)$~\cite{Charon2001} to $(5r+3)/(6r(r+1))$ and for odd $r\ge 23$ we have improved the bound from approximately $8/(9r)$~\cite{Charon2001} to $(5r^2+7r-3)/((6r-2)(r+1)^2)$.

Constructions for $2\le r\le 30$ given in
~\cite{Charon2002} were previously best known.  However, the best general
constructions were given in ~\cite{Charon2001}.

\bibliographystyle{plain}
\bibliography{../bibtex/bibtex}

\end{document}